\let\SavedRightarrow=\Rightarrow
\let\Rightarrow=\SavedRightarrow
\newcommand{\Tee }{\mathcal T}
\newcommand{\cl}{\operatorname{cl}}
\renewcommand{\int}{\operatorname{Int}}
\newtheorem{thm}{Theorem}[section]
\newtheorem{pro}[thm]{Proposition}
\newtheorem{lem}[thm]{Lemma}
\newtheorem{que}[thm]{Question}
\newtheorem{cor}[thm]{Corollary}
\begin{document}

\title{Skeletally generated spaces and absolutes}
\subjclass[2000]{Primary: 54C10; Secondary: 54F65}
\keywords{Inverse system, I-favorable space, $\kappa$-metrizable space, skeletal map,
superextension}

\author{V. Valov}
\address{Department of Computer Science and Mathematics, Nipissing University,
100 College Drive, P.O. Box 5002, North Bay, ON, P1B 8L7, Canada}
\email{veskov@nipissingu.ca}
\thanks{The author was supported by NSERC Grant 261914-13}

 \keywords{absolute, Dugundji space, $\kappa$-metrizable space, skeletal maps, skeletally generated spaces}

\subjclass[2010]{Primary 5B35; Secondary 54E99 54F99}

\begin{abstract}
 Some properties of skelatally generated spaces are established. In particular, it is shown that any
 compactum co-absolute to a $\kappa$-metrizable compactum is skeletally generated. We also prove that
 a compactum $X$ is skeletally generated if and only if its superextension $\lambda X$ is skeletally
 Dugundji and raise some natural questions.
\end{abstract}

\maketitle \markboth{}{Skeletally generated spaces}

\section{Introduction}

In this paper we provide more properties of skeletally generated spaces introduced in \cite{vv}.
It was shown in \cite{vv} that skeletally generated spaces coincide with
I-favorable  spaces \cite{dkz} (for compact spaces that was actually done in \cite{kp8}, see also \cite{kp7}
and \cite{kp9}).
It is interesting that at first view skeletally generated spaces are quite different from I-favorable spaces.
I-favorable spaces were defined as spaces for which the first player has a winning strategy when two players
play the so called open-open game, while skeletally generated spaces can be considered as a skeletal counterpart
of $\kappa$-metrizable compacta \cite{s76} (everywhere in this paper by a compactum we mean a compact Hausdorff space).

Recall that a map $f:X \to Y$ is called
\textit{skeletal} \cite{mr} (resp., {\em semi-open}) if the set $\int_Y\cl_Y f(U)$ (resp., $\int_Yf(U)$) is non-empty, for
any $U\in \Tee_X$. Obviously, every semi-open map is skeletal, and both notions are equivalent for closed maps.

The paper is organized as follows. Some properties of skeletally generated spaces are provided in Section 2.
It is shown that every space co-absolute with a skeletally generated space is also skeletally generated, see Theorem 2.4 (recall that two  spaces
are co-absolute if their absolutes are homeomorphic). In particular, any space co-absolute to a $\kappa$-metrizable compactum is skeletally generated (Corollary 2.5). Section 3 is devoted to the connection of skeletally generated and skeletally Dugundji spaces introduced in \cite{kpv1}.
It is well known that a compactum $X$ is $\kappa$-metrizable if and only if its superextension is a Dugundji space, see \cite{i1} and \cite{s81}.
Theorem 3.1 states that the same connection holds between skeletally generated and skeletally Dugundji compacta (this result, in different terminology, was announced without a proof in \cite{va1}; let us also mention that for zero-dimensional compacta the equivalence $(i)\Rightarrow (iii)$ from Theorem 3.1 was established in \cite[Theorem 5.5.9]{hsh}).

Corollary 2.5, mentioned above, suggests the following problem:
\begin{que}
Is is true that a compactum is skeletally generated if and only if it is co-absolute to a $\kappa$-metrizable compactum?
\end{que}
Obviously, Question 1.1 is interesting for compacta of weight greater that $\aleph_1$. For zero-dimensional compacta this question was posed by 
Heindorf and Shapiro \cite[Section 5.5, p. 140]{hsh}.

Since every skeletally Dugundji space is skeletally generated \cite[Corollary 3.4]{kpv1}, one can ask if there is any skeletally generated space which is not skeletally Dugundji. The referee of this paper pointed out that Heindorf and Shapiro \cite[Proposition 6.3.2]{hsh} established such an example. 
The existence of a skeletally generated space which is not skeletally Dugundji provides a solution to a Question 1.13 from \cite{dkz} whether every compact I-favorable
space is co-absolute to a dyadic space. Indeed, suppose that every I-favorable compactum is co-absolute to a dyadic compactum. Since, by \cite[Corollary 4.5]{kpv1}, dyadic compacta are  skeletally Dugundji, it follows that every compact I-favorable
space is co-absolute to a skeletally Dugundji space. Consequently, all compact I-favorable
spaces would be skeletally Dugundji.


Our results for 0-dimensional compact spaces can be translated using the language of Boolean algebras, see \cite{hsh}. For example, Question 1.1 is   equivalent to the question whether each regularly filtered algebra is co-complete to an rc-filtered one (see \cite{hsh}, p. 140).

All spaces in this paper are Tychonoff and the maps are continuous.

\section{Skeletally generated spaces}

In this section we provide some properties of skeletally generated spaces.

For a given space $X$ we introduce an order on the set of all maps having domain $X$. If $\phi_1$ and $\phi_2$ are two such
maps, we write $\phi_1\prec\phi_2$ if there exists a map
$\phi\colon\phi_1(X)\to\phi_2(X)$ such that $\phi_2=\phi\circ\phi_1$. The notation $\phi_1=\phi_2$ means that the map $\phi$ is
a homeomorphism between $\phi_1(X)$ and $\phi_2(X)$.
We say that $X$ is skeletally generated \cite{vv} if there exists an inverse
system $\displaystyle S=\{X_\alpha, p^{\beta}_\alpha, A\}$ of
separable metric spaces $X_\alpha$ such that:
\begin{itemize}
\item[(1)] All bonding maps $p^{\beta}_\alpha$ are surjective and
skeletal;
\item[(2)] The index set $A$ is $\sigma$-complete (every countable chain in
$A$ has a supremum in $A$);
\item[(3)] For every countable chain $\{\alpha_n:n\geq 1\}\subset A$ with
$\beta=\sup\{\alpha_n:n\geq 1\}$ the space $X_\beta$ is a (dense)
subset of
$\displaystyle\lim_\leftarrow\{X_{\alpha_n},p^{\alpha_{n+1}}_{\alpha_n}\}$;
\item[(4)] $X$ is embedded in $\displaystyle\lim_\leftarrow
S$ such that $p_\alpha(X)=X_\alpha$ for each $\alpha$, where
$p_\alpha\colon\displaystyle\lim_\leftarrow S\to X_\alpha$ is the
$\alpha$-th limit projection;
\item[(5)] For every bounded continuous
real-valued function $f$ on $X$ there exists $\alpha\in A$ such that $p_\alpha\prec f$.
\end{itemize}

Condition $(4)$ implies that $X$ is a dense subset of
$\displaystyle\lim_\leftarrow S$.
An inverse system $S$ with surjective bonding maps satisfying conditions $(2)$ and $(3)$ is called
almost $\sigma$-continuous.  If there exists an almost $\sigma$-continuous system $S$ satisfying
condition $(4)$, we say that $X$ is the almost limit of $S$, notation $X=\mathrm{a}-\displaystyle\lim_\leftarrow S$.

The following characterizations of skeletally generated spaces was established in \cite{vv} (see also \cite{kp8} for
the equivalence of items $(i)$ and $(ii)$ in case $X$ is compact)

\begin{pro}\cite{vv}
For a space $X$ the following are equivalent:
\begin{itemize}
\item[(i)] $X$ is skeletally generated;
\item[(ii)] $X$ is $\mathrm{I}$-favorable;
\item[(iii)] Every $C^*$-embedding of $X$ in another space
is $\pi$-regular.
\end{itemize}
\end{pro}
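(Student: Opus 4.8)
\emph{Proof plan.} The three conditions are of very different character --- an explicit inverse-system description in (i), a game on open sets in (ii), and an extension-operator property in (iii) --- so the natural strategy is to establish the cycle (i) $\Rightarrow$ (ii) $\Rightarrow$ (iii) $\Rightarrow$ (i), using in each implication the feature of one formulation that best matches the next. (This equivalence is established in \cite{vv}; the plan below indicates the shape such a proof takes.)

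For (i) $\Rightarrow$ (ii) I would read a winning strategy for the first player directly off an almost $\sigma$-continuous system $S=\{X_\alpha,p^\beta_\alpha,A\}$ witnessing skeletal generation. The first player, reacting to the second player's open sets, builds an increasing countable chain $\alpha_1<\alpha_2<\cdots$ in $A$ so that at the $n$-th round the move lives over a basic open set of the separable metric factor $X_{\alpha_n}$; since each $X_{\alpha_n}$ is second countable one can interleave an enumeration of a countable base of the eventual limit factor with the rounds of the game and force the traces of the second player's responses to be dense in that factor. Skeletality of the bonding maps then lifts this density back to a dense union of open sets in $X$, while $\sigma$-completeness of $A$ together with condition (3) guarantees that $\beta=\sup_n\alpha_n$ is again an index of $S$ capturing the whole play.

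For (ii) $\Rightarrow$ (iii) I would take a winning strategy $\sigma$ for the first player and an arbitrary $C^*$-embedding $X\subseteq Y$, and manufacture the required $\pi$-regular operator from the tree of open sets generated by running $\sigma$ against all admissible responses. Every branch of this tree has a dense union of second-player moves by the winning condition, and I would use the tree to assign to each regular open set of $Y$ an open subset of $X$ in a monotone fashion that preserves the relevant suprema modulo nowhere dense error; the density clause in the definition of a $\pi$-regular operator is precisely the game-theoretic density condition, transported along the embedding. Finally, for (iii) $\Rightarrow$ (i) I would apply the hypothesis to the canonical embedding $X\subseteq\beta X$, obtain a $\pi$-regular operator, and build $S$ by a closing-off argument: take an increasing $\sigma$-complete family of countably generated subalgebras of $C^*(X)$ that are invariant under the operator and under a fixed countable list of natural operations; each such subalgebra determines a separable metric quotient $X_\alpha$, the quotient maps are skeletal precisely because the operator is $\pi$-regular, and condition (5) holds because every bounded function lies in some member of the family.

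The step I expect to be the genuine obstacle is (ii) $\Rightarrow$ (iii): a winning strategy is an inherently sequential, branching object, whereas a $\pi$-regular operator is a single global assignment satisfying algebraic regularity identities, so the real work is to organize the strategy tree so that the many local density guarantees assemble into the global operator identities, and to do so uniformly in the ambient space $Y$. By contrast, (i) $\Rightarrow$ (ii) and (iii) $\Rightarrow$ (i) are comparatively routine inverse-limit and elementary closing-off arguments.
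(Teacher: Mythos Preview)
The paper does not prove this proposition at all: it is quoted from \cite{vv} and merely used as a tool in the subsequent arguments, so there is no ``paper's own proof'' to compare your plan against. Your cycle (i) $\Rightarrow$ (ii) $\Rightarrow$ (iii) $\Rightarrow$ (i) is a reasonable shape for such an argument, and the sketches for (i) $\Rightarrow$ (ii) and (iii) $\Rightarrow$ (i) are plausible outlines of the kind of inverse-limit/bookkeeping and closing-off arguments that appear in this literature.

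There is, however, a concrete problem in your (ii) $\Rightarrow$ (iii) sketch: you write that you would ``assign to each regular open set of $Y$ an open subset of $X$'', but the $\pi$-regular operator required by condition (iii) goes the other way --- it is a map $\mathrm{e}\colon\Tee_X\to\Tee_Y$ with $\mathrm{e}(U)\cap X$ dense in $U$ and $\mathrm{e}(U)\cap\mathrm{e}(V)=\varnothing$ whenever $U\cap V=\varnothing$. The language of ``suprema modulo nowhere dense error'' suggests you are picturing a homomorphism between regular-open algebras rather than the operator actually demanded here. The genuine content of this implication is to produce, for each open $U\subset X$, an open extension in the ambient $Y$ in a way that respects disjointness; the winning strategy for player~I enters because one must show that the natural candidates for $\mathrm{e}(U)$ (built from countable plays of $\sigma$) actually cover densely and behave disjointly, and this requires organizing the strategy tree in the right direction. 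As written, your plan for this step does not yet target the correct object, so the step you flagged as the hardest is also the one where the outline is not yet on track.
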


Here, $X$ is called $\mathrm{I}$-favorable \cite{dkz} if there exists a function $\sigma:\bigcup\{\Tee_X^n:n\geq
0\}\to\Tee_X$, where $\Tee_X$ is the topology of $X$, such that for each sequence
$B_0, B_1,..,$ of non-empty open subsets of $X$ with
$B_0\subset\sigma(\varnothing)$ and  $B_{k+1}\subset\sigma(B_0,B_1,..,B_k)$ for each $k$,
the union  $\bigcup_{k\geq 0}B_k$ is dense in $X$. We say that a subspace $X$ of a space $Y$ is $\pi$-regularly embedded in $Y$ if there exists a function $\mathrm{e}\colon\Tee_X\to\Tee_Y$ between the topologies of $X$ and $Y$ such that:
\begin{itemize}
\item[($\mathrm{e}$1)] $\mathrm{e}(\varnothing)=\varnothing$ and $\mathrm{e}(U)\cap X$ is a dense subset of $U$;
\item[($\mathrm{e}$2)] $e(U)\cap\mathrm{e}(V)=\varnothing$ for any $U,V\in\Tee_X$ provided $U\cap V=\varnothing$.
\end{itemize}
The operator $\mathrm{e}$ is called strongly $\pi$-regular if, in additional, it satisfies condition $(e3)$ below.
\begin{itemize}
\item[($\mathrm{e}$3)] $\mathrm{e}(U\cap V)=e(U)\cap\mathrm{e}(V)$ for any $U,V\in\Tee_X$.
\end{itemize}
Note that $\pi$-regular embeddings were introduced in \cite{va1}, while strongly $\pi$-regular embeddings were considered
in \cite{sh1} under the name $\pi$-regular.

Next lemma follows from Daniels-Kunen-Zhou's observation \cite[Fact 1.3, p.207]{dkz}. For completeness, we provide here a proof of that fact. 
\begin{lem}
Let $f\colon X\to Y$ be a closed irreducible map. Then $X$ is a skeletally generated space if and only if $Y$ is so.
\end{lem}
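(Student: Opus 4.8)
The plan is to rely on the equivalence $(i)\Leftrightarrow(ii)$ of the preceding Proposition and argue directly with the open--open game, transporting a winning strategy for the first player across $f$ in both directions. The key tool is the assignment $U\mapsto U^{\#}:=Y\setminus f(X\setminus U)$, which sends every non-empty $U\in\Tee_X$ to a non-empty $U^{\#}\in\Tee_Y$: it is open because $f$ is closed, and non-empty because otherwise $X\setminus U$ would be a proper closed set with $f(X\setminus U)=Y$, contradicting irreducibility. First I would record the elementary identities $U^{\#}=\{y:f^{-1}(y)\subseteq U\}$, $f^{-1}(U^{\#})\subseteq U$, and $(f^{-1}(V))^{\#}=V$ for every $V\in\Tee_Y$ (using that $f$ is onto), together with the monotonicity of both $U\mapsto U^{\#}$ and $V\mapsto f^{-1}(V)$.

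For the implication ``$Y$ skeletally generated $\Rightarrow$ $X$ skeletally generated'' I would fix a winning strategy $\sigma_Y$ for the first player in the open--open game on $Y$ and define a strategy $\sigma_X$ on $X$ by translating plays through $\#$: given a legal partial play $B_0,\dots,B_k$ of the second player in $X$, set $W=\sigma_Y(B_0^{\#},\dots,B_k^{\#})$ and put $\sigma_X(B_0,\dots,B_k)=f^{-1}(W)$ (with the initial move $\sigma_X(\varnothing)=f^{-1}(\sigma_Y(\varnothing))$). The point is that this translation turns a legal $X$-play into a legal $Y$-play: if $B_{k+1}\subseteq\sigma_X(B_0,\dots,B_k)=f^{-1}(W)$, then monotonicity and $(f^{-1}(W))^{\#}=W$ give $B_{k+1}^{\#}\subseteq W$. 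For the reverse implication I would proceed symmetrically, starting from a winning $\sigma_X$ and setting $\sigma_Y(C_0,\dots,C_k)=\big(\sigma_X(f^{-1}(C_0),\dots,f^{-1}(C_k))\big)^{\#}$; here legality of the induced $X$-play uses $f^{-1}(C_{k+1})\subseteq f^{-1}(\sigma_Y(C_0,\dots,C_k))\subseteq\sigma_X(f^{-1}(C_0),\dots,f^{-1}(C_k))$.

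It remains to check that a win in the translated game forces a win in the original one, and this density bookkeeping is the only real obstacle. In the first direction the winning condition for $\sigma_Y$ gives that $\bigcup_k B_k^{\#}$ is dense in $Y$; since $f^{-1}(\bigcup_k B_k^{\#})\subseteq\bigcup_k B_k$, it suffices to know that $f^{-1}$ carries dense sets to dense sets, which holds for irreducible $f$: for non-empty $U\in\Tee_X$ one has $U^{\#}\neq\varnothing$, so a dense $D\subseteq Y$ meets $U^{\#}$, whence $\varnothing\neq f^{-1}(D\cap U^{\#})\subseteq f^{-1}(D)\cap U$. In the second direction the winning condition for $\sigma_X$ gives that $\bigcup_k f^{-1}(C_k)$ is dense in $X$, and applying the continuous surjection $f$ (which sends dense sets to dense sets) yields that $\bigcup_k C_k=f\big(\bigcup_k f^{-1}(C_k)\big)$ is dense in $Y$. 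Thus in each case the transported strategy is again winning, so $X$ is $\mathrm I$-favorable if and only if $Y$ is, and the equivalence $(i)\Leftrightarrow(ii)$ of the above Proposition finishes the proof.
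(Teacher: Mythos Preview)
Your proof is correct and follows essentially the same approach as the paper: both use the equivalence with $\mathrm{I}$-favorability from Proposition~2.1 and transport winning strategies via $f^{-1}$ and the small-image operator $U\mapsto U^{\#}$ (the paper writes $f^{\sharp}$). For the direction $Y\Rightarrow X$ your argument and the paper's are identical; for $X\Rightarrow Y$ the paper simply observes that a closed irreducible map is skeletal and then cites \cite[Lemma~1]{kp9} (skeletal images of $\mathrm{I}$-favorable spaces are $\mathrm{I}$-favorable), whereas you write out the symmetric game argument directly, which makes your proof self-contained but is not a genuinely different route.
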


\begin{proof}
Assume $Y$ is skeletally generated. So, by Proposition 2.1, there exists a function $\sigma_Y:\bigcup\{\Tee_Y^n:n\geq
0\}\to\Tee_Y$ such that for each sequence $B_0, B_1,..,$ of non-empty open subsets of $Y$ with
$B_0\subset\sigma_Y(\varnothing)$ and  $B_{k+1}\subset\sigma_Y(B_0,B_1,..,B_k)$ for each $k$,
$\bigcup_{k\geq 0}B_k$ is dense in $Y$. Since $f$ is irreducible, the set $f^{\sharp}(V)=\{y\in Y: f^{-1}(y)\subset V\}$
is non-empty and open in $Y$ for every open $V\subset X$. Therefore, the function $\sigma_X:\bigcup\{\Tee_X^n:n\geq
0\}\to\Tee_X$, $\sigma_X(\varnothing)=f^{-1}(\sigma_Y(\varnothing))$ and $\sigma_X(V_1,V_2,..,V_n)=f^{-1}\big(\sigma_Y(f^{\sharp}(V_1),f^{\sharp}(V_2),..,f^{\sharp}(V_n))\big)$ is well
defined. If $V_0,V_1,..$ is a sequence of non-empty open subsets of $X$ with $V_0\subset\sigma_X(\varnothing)$ and  $V_{k+1}\subset\sigma_X(V_0,V_1,..,V_k)$ for each $k$, then $U_0\subset\sigma_Y(\varnothing)$ and
$U_{k+1}\subset\sigma_Y(U_0,U_1,..,U_k)$, where $U_k=f^{\sharp}(V_k)$, $k\geq 0$. Hence, $\bigcup_{k\geq 0}U_k$ is
dense in $Y$. Consequently, $\bigcup_{k\geq 0}f^{-1}(U_k)$ is
dense in $X$ (recall that $f$ is closed and irreducible).  Finally, because $f^{-1}(U_k)\subset V_k$, we obtain that
$\bigcup_{k\geq 0}V_k$ is dense in $X$. Thus, $X$ is skeletally generated.

The map $f$ being irreducible and closed is skeletal. Then, according to \cite[Lemma 1]{kp9}, $Y$ is $\mathrm{I}$-favorable
provided so is $X$.
\end{proof}

\begin{cor}
If $X$ is skeletally generated, then so is each compactification of $X$.
\end{cor}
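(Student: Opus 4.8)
The plan is to derive the statement from the I-favorable characterization in Proposition 2.1 together with the fact that I-favorability is invariant under passing to a dense superspace. Let $cX$ be a compactification of $X$, so that $X$ is a dense subspace of $cX$. Note first that Lemma 2.2 does not apply directly here: the inclusion $X\hookrightarrow cX$ is not a closed irreducible surjection (it is not even onto unless $X=cX$), so one cannot simply quote the preceding lemma. Instead, since $X$ is skeletally generated, Proposition 2.1 gives a function $\sigma_X\colon\bigcup\{\Tee_X^n:n\geq 0\}\to\Tee_X$ witnessing that $X$ is I-favorable, and I would transport this strategy to $cX$ and then invoke Proposition 2.1 again.

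The transport uses the canonical open extension of open subsets across a dense inclusion. For $U\in\Tee_X$ I would put $U^{*}=cX\setminus\cl_{cX}(X\setminus U)$. This is the largest open subset of $cX$ whose trace on $X$ is $U$; indeed $U^{*}\cap X=X\setminus\big(\cl_{cX}(X\setminus U)\cap X\big)=X\setminus\cl_X(X\setminus U)=X\setminus(X\setminus U)=U$, because $X\setminus U$ is closed in $X$, and $\varnothing^{*}=\varnothing$ since $X$ is dense in $cX$. Then define $\sigma_{cX}\colon\bigcup\{\Tee_{cX}^n:n\geq 0\}\to\Tee_{cX}$ by $\sigma_{cX}(\varnothing)=(\sigma_X(\varnothing))^{*}$ and $\sigma_{cX}(W_1,\dots,W_n)=\big(\sigma_X(W_1\cap X,\dots,W_n\cap X)\big)^{*}$.

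Next I would verify that $\sigma_{cX}$ witnesses I-favorability of $cX$. Let $C_0,C_1,\dots$ be non-empty open subsets of $cX$ with $C_0\subset\sigma_{cX}(\varnothing)$ and $C_{k+1}\subset\sigma_{cX}(C_0,\dots,C_k)$ for every $k$. Putting $B_k=C_k\cap X$, density of $X$ makes each $B_k$ a non-empty open subset of $X$, and intersecting the admissibility relations with $X$ and using $U^{*}\cap X=U$ yields $B_0\subset\sigma_X(\varnothing)$ and $B_{k+1}\subset\sigma_X(B_0,\dots,B_k)$. Hence $\bigcup_k B_k$ is dense in $X$; since $X$ is dense in $cX$ and $\bigcup_k B_k\subset\bigcup_k C_k$, the union $\bigcup_k C_k$ is dense in $cX$. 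Thus $cX$ is I-favorable, and Proposition 2.1 gives that $cX$ is skeletally generated.

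The only genuinely delicate point is the behaviour of the extension operator $U\mapsto U^{*}$: that it returns open sets with the correct trace on $X$ and that it converts a $\sigma_{cX}$-admissible play into a $\sigma_X$-admissible one after restriction to $X$. Everything else is the routine transfer of density through the dense inclusion $X\subset cX$, reflecting the fact that the open-open game depends only on the regular open algebra, which $X$ and $cX$ share. I do not expect compactness of $cX$ to enter anywhere, so the same argument in fact shows that $X$ stays skeletally generated in any space containing it as a dense subspace, with the stated corollary being the special case of a compactification.
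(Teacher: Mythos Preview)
Your argument is correct, but it follows a genuinely different route from the paper's. The paper does not transport the strategy directly along the dense inclusion $X\hookrightarrow cX$; instead it factors through the Stone--\v Cech compactification: it quotes \cite{vv} for the fact that $\beta X$ is skeletally generated whenever $X$ is, and then observes that the canonical surjection $\beta X\to cX$ is closed and irreducible, so Lemma~2.2 applies to give that $cX$ is skeletally generated. Thus the paper uses Lemma~2.2 exactly once, on the map $\beta X\to cX$, not on $X\to cX$.

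Your approach has two advantages. First, it is self-contained: you do not need the external result from \cite{vv} about $\beta X$, nor Lemma~2.2. Second, as you note, it proves the stronger statement that any space containing $X$ as a dense subspace is skeletally generated, with compactness of $cX$ playing no role; this is essentially the observation (implicit in \cite{dkz}) that I-favorability is an invariant of the regular open algebra. The paper's route, on the other hand, makes explicit the link with $\beta X$ and illustrates how Lemma~2.2 feeds into the corollary, which fits the local narrative of the section.
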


\begin{proof}
It was shown in \cite{vv} that $\beta X$ is skeletally generated. Then Lemma 2.2 completes the proof
because any compactification of $X$ is an irreducible image of $\beta X$.
\end{proof}

\begin{thm}
Any space co-absolute to a skeletally generated space is skeletally generated.
\end{thm}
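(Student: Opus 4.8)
The plan is to obtain the result as a direct application of Lemma 2.2, pushing the hypothesis up to the common absolute and then back down. Recall that the absolute $E(X)$ of a space $X$ carries a canonical perfect irreducible surjection $\pi_X\colon E(X)\to X$, and likewise $\pi_Y\colon E(Y)\to Y$; the assumption that $X$ and $Y$ are co-absolute furnishes a homeomorphism $h\colon E(X)\to E(Y)$. Since a perfect map is closed, both $\pi_X$ and $\pi_Y$ are closed irreducible maps, so Lemma 2.2 applies to each of them.

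I would then argue in three steps. First, since $Y$ is skeletally generated and $\pi_Y\colon E(Y)\to Y$ is closed and irreducible, Lemma 2.2 yields that $E(Y)$ is skeletally generated. Second, being skeletally generated is a purely topological property (for instance via the $\mathrm{I}$-favorability characterization of Proposition 2.1), so the homeomorphism $h$ transports it: $E(X)$ is skeletally generated. Third, applying Lemma 2.2 to the closed irreducible map $\pi_X\colon E(X)\to X$ in the converse direction, I conclude that $X$ is skeletally generated, which is the assertion.

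The argument is short precisely because Lemma 2.2 does all the work; the only point requiring care is the appeal to the properties of the absolute, namely that the canonical map onto the absolute is perfect (hence closed) and irreducible, so that Lemma 2.2 is legitimately applicable. For compacta this is immediate, as the absolute is again a compactum and continuous surjections of compacta are automatically closed, while irreducibility is built into the construction. For general Tychonoff spaces one simply invokes the standard properties of the (Iliadis) absolute, where the defining projection is perfect and irreducible by design; thus no genuine obstacle arises, and the identification of ``co-absolute'' with the existence of the homeomorphism $h$ is the only external input beyond Lemma 2.2.
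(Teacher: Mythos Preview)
Your proof is correct and follows essentially the same route as the paper: lift the property to the common absolute via Lemma 2.2 along one closed irreducible projection, then push it back down along the other. The only differences are cosmetic (you swap the roles of $X$ and $Y$ and add explicit remarks on why the absolute's projection is closed and irreducible), so nothing further needs to be said.
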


\begin{proof}
Suppose the spaces $X$ and $Y$ are co-absolute and $Z$ is their common absolute. Then there are closed irreducible maps
$\theta_X\colon Z\to X$ and $\theta_Y\colon Z\to Y$. If $X$ is skeletally generated, then $Z$ is also skeletally generated
(by Lemma 2.2). Applying again Lemma 2.2, we conclude that $Y$ is skeletally generated.
\end{proof}

\begin{cor}
Any space co-absolute to a $\kappa$-metrizable compactum is skeletally generated.
\end{cor}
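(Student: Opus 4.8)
The plan is to obtain this as an immediate consequence of Theorem 2.4. That theorem already settles the co-absoluteness part, so the only thing that needs to be supplied is the statement that every $\kappa$-metrizable compactum is itself skeletally generated. Granting this, if $X$ is co-absolute to a $\kappa$-metrizable compactum $K$, then $K$ is skeletally generated, and Theorem 2.4 applied to the pair $X,K$ yields that $X$ is skeletally generated.

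Thus the real content is to verify that a $\kappa$-metrizable compactum $K$ is skeletally generated. Here I would invoke Shchepin's characterization \cite{s76} (see also \cite{s81}): the $\kappa$-metrizable compacta are exactly the \emph{openly generated} ones, that is, those representable as $K=\lim_\leftarrow S$ for a $\sigma$-continuous factorizing inverse system $S=\{X_\alpha,p^{\beta}_\alpha,A\}$ of metrizable compacta with open surjective bonding maps. My claim is that such a system witnesses the five defining conditions of skeletal generation verbatim.

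The verification then runs condition by condition. For (1), an open surjection is in particular semi-open, and every semi-open map is skeletal, as recorded in the Introduction; hence the bonding maps $p^{\beta}_\alpha$ are skeletal. Conditions (2) and (3) are precisely the assertion that $S$ is $\sigma$-continuous: the index set is $\sigma$-complete and, for a countable chain with supremum $\beta$, one has $X_\beta=\lim_\leftarrow\{X_{\alpha_n},p^{\alpha_{n+1}}_{\alpha_n}\}$. Condition (4) is the representation $K=\lim_\leftarrow S$ together with $p_\alpha(K)=X_\alpha$, which is just surjectivity of the limit projections onto the (metrizable) factors. Finally, (5) is exactly the factorizing property: a bounded continuous real-valued function $f$ on $K$ is a map into a separable metric space, so it factors through some projection $p_\alpha$, giving $p_\alpha\prec f$.

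The only step demanding care is arranging (5) simultaneously with the $\sigma$-continuity needed for (2) and (3): one must pick the open inverse representation of $K$ so that it is \emph{factorizing} while remaining $\sigma$-continuous. This is precisely what Shchepin's spectral theorem delivers for $\kappa$-metrizable compacta, so no construction beyond citing that representation is required, and the remaining conditions are routine. With $K$ thus shown to be skeletally generated, the corollary follows at once from Theorem 2.4.

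\begin{proof}
It was shown by Shchepin \cite{s76} that a compactum is $\kappa$-metrizable if and only if it is openly generated, i.e.\ the limit of a $\sigma$-continuous factorizing inverse system of metrizable compacta with open surjective bonding maps. Since open surjections are semi-open, hence skeletal, such a system satisfies conditions $(1)$–$(4)$ in the definition of skeletally generated spaces, while the factorizing property yields $(5)$. Therefore every $\kappa$-metrizable compactum is skeletally generated, and Theorem 2.4 completes the proof.
\end{proof}
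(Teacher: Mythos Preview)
Your proposal is correct and matches the paper's intended argument: the paper states Corollary 2.5 without proof, immediately after Theorem 2.4, relying on the well-known fact (alluded to in the Introduction and in the remark following the corollary) that $\kappa$-metrizable compacta, being openly generated, are in particular skeletally generated. Your write-up simply makes explicit the verification that an open $\sigma$-continuous factorizing system witnesses the definition, which the paper leaves to the reader.
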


Recall that $\kappa$-metrizable compacta can be also characterized as the compact spaces $X$ possessing a
lattice in the sense of Shchepin \cite{s76} consisting of open maps. This means that there exists a family $\Psi$ of open maps
with domain $X$ such that:
\begin{itemize}
\item[(L1)] For any map $f\colon X\to f(X)$ there exists $\phi\in\Psi$ with $\phi\prec f$ and $w(\phi(X))\leq w(f(X)$;
\item[(L2)] If $\{\phi_\alpha:\alpha\in A\}\subset\Psi$ such that the diagonal product
$\triangle\{\phi_{\alpha_i}:\alpha_i\in A, i=1,..,k\}$ belongs to $\Psi$ for any finitely many $\alpha_i\in A$, then $\triangle\{\phi_{\alpha}:\alpha\in A\}\in\Psi$.
\end{itemize}

\begin{pro}
Every skeletally generated space has a lattice of skeletal maps.
\end{pro}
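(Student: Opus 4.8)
The plan is to read the lattice $\Psi$ off the inverse system $S=\{X_\alpha,p^\beta_\alpha,A\}$ witnessing that $X$ is skeletally generated. For $C\subseteq A$ I write $p_C=(\triangle\{p_\alpha:\alpha\in C\})|_X$ and $Y_C=p_C(X)$, and I take $\Psi$ to consist of all maps $p_C$ (modulo the relation $=$) for which $C$ is directed and closed under suprema of countable chains; for such $C$ the restricted system $S_C=\{X_\alpha,p^\beta_\alpha,C\}$ is again almost $\sigma$-continuous and $Y_C=\mathrm{a}-\lim_\leftarrow S_C$. Everything then splits into three tasks: each $p_C$ is skeletal, $\Psi$ satisfies (L1), and $\Psi$ satisfies (L2).

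First I would verify that a single limit projection $p_\alpha\colon X\to X_\alpha$ is skeletal. Since $A$ is directed, any nonempty $U\in\Tee_X$ contains a basic set $p_\beta^{-1}(W)\cap X$ with $\beta\ge\alpha$ and $\emptyset\ne W\in\Tee_{X_\beta}$; because $p_\beta(X)=X_\beta$ one computes $p_\alpha(U)\supseteq p^\beta_\alpha(W)$, and skeletality of the bonding map $p^\beta_\alpha$ forces $\int_{X_\alpha}\cl_{X_\alpha}p_\alpha(U)\ne\emptyset$. The same calculation settles a general $p_C$ in the favourable case where the index separating $U$ can be chosen inside $C$: if $p_\delta^{-1}(W')\cap X\subseteq U$ for some $\delta\in C$, then in fact $p_C(U)\supseteq q_\delta^{-1}(W')$, where $q_\delta\colon Y_C\to X_\delta$ is the coordinate map, so $p_C(U)$ already contains a nonempty open subset of $Y_C$.

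The hard part will be the skeletality of a general $p_C$ when the index $\gamma$ separating a given $U$ cannot be taken inside $C$. All the calculation above then yields is that $p_C(U)$ is somewhere dense in each coordinate $X_c$, $c\in C$, which by itself need not make it somewhere dense in $Y_C$; and no soft property of skeletal maps helps here, since a map is in general neither skeletal because it refines, nor because it is refined by, a skeletal one. This is where conditions (1)--(3) must be used in concert. I would obtain the relevant indices from elementary submodels $M$, closed under countable sequences, of a large enough fragment of the universe, setting $C=A\cap M$: such $C$ is automatically directed and closed under countable suprema, and the closure of $M$ under countable sequences, together with the $\sigma$-continuity of $S$ at suprema of countable chains (condition (3)), is precisely what should turn the coordinatewise density found above into genuine somewhere-density of $p_C(U)$ in $Y_C$ for an arbitrary $U$. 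I expect this gluing step --- passing from coordinatewise somewhere-density to somewhere-density in the sub-almost-limit --- to be the one real obstacle of the proof, resting essentially on $\sigma$-continuity rather than on any formal manipulation of skeletal maps.

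Granting that every $p_C\in\Psi$ is skeletal, the lattice axioms are comparatively routine. For (L1), given $f\colon X\to f(X)$ of weight $\tau$, I would embed $f(X)$ in a product of $\tau$ bounded real factors to present $f$ as a diagonal product of $\tau$ bounded continuous functions on $X$; condition (5) supplies for each factor an index with $p_\alpha$ coarser than that factor, and taking $C$ to be the directed, $\sigma$-closed hull of these $\tau$ indices gives $p_C\prec f$ with $w(Y_C)\le|C|\le\tau=w(f(X))$, while $p_C\in\Psi$ is skeletal by the lemma above. For (L2), the point is that a diagonal product of members $p_{C_j}$ of $\Psi$ is $p_{\bigcup_jC_j}$, so the axiom amounts to the statement that whenever all finite sub-products $p_{C_{j_1}\cup\cdots\cup C_{j_k}}$ lie in $\Psi$ the full product does too; this is a consistency requirement on the traces $C$ which I would again verify through the same submodel-and-$\sigma$-continuity machinery, so that (L2), like skeletality, ultimately reduces to the $\sigma$-continuity of the system.
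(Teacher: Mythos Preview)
Your plan works directly with the defining inverse system, whereas the paper never touches $S$ at all. Instead it embeds $X$ as a $C$-embedded subset of $M=\mathbb R^A$, invokes Proposition~2.1 to obtain a (strongly) $\pi$-regular operator $\mathrm e\colon\Tee_X\to\Tee_M$, and declares a set $B\subset A$ to be \emph{$\mathrm e$-admissible} when
\[
\pi_B^{-1}\big(\pi_B(\overline{\mathrm e(\pi_B^{-1}(U)\cap X)})\big)=\overline{\mathrm e(\pi_B^{-1}(U)\cap X)}
\]
for every $U$ in a fixed countable base of $\mathbb R^B$. The lattice is then $\{\pi_B|_X:B\ \mathrm e\text{-admissible}\}$, and both skeletality of these restrictions and axioms (L1)--(L2) are read off the operator $\mathrm e$ by the closing-off argument of \cite[Proposition 3.1(ii)]{kpv1}. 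No gluing of coordinatewise information over a sub-index-set is ever needed; the operator $\mathrm e$ carries that burden.

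Your route has a real gap exactly where you flag it. The assertion that $p_C$ is skeletal for \emph{every} directed, $\sigma$-closed $C\subset A$ is not proved, and coordinatewise somewhere-density of $p_C(U)$ in each $X_\alpha$, $\alpha\in C$, does not by itself force somewhere-density in $Y_C$ (think of a diagonal in a product: full in each factor, nowhere dense in the product). So $\Psi$ as first defined may well contain non-skeletal maps. Your fallback to $C=A\cap M$ for $\omega$-closed elementary submodels $M$ is plausible for producing skeletal projections, but it damages the lattice axioms: an $\omega$-closed $M$ has size at least $2^{\aleph_0}$ (more generally one needs $\tau^{\aleph_0}=\tau$), so (L1) fails for $f$ of small weight unless you treat those cases separately; and once $\Psi$ is restricted to traces of submodels, neither finite nor infinite unions of $A\cap M_j$ are again traces of submodels, so your reduction of (L2) to ``the same submodel machinery'' does not go through as stated. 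The paper's $\mathrm e$-admissible sets are precisely a device that singles out the ``good'' sub-index-sets while remaining stable under the operations (L1) and (L2) require --- that stability is what your submodel argument would still have to supply.
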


\begin{proof}
We consider $X$ as a $C$-embedded subset of $M=\mathbb R^A$ for some $A$. Then there exists a strongly $\pi$-regular operator $\mathrm{e}\colon\Tee_X\to\Tee_M$, see Proposition 2.1. For any countable set $B\subset A$ let $\mathcal B_B$ be an open
base for $\mathbb R^B$ of cardinality $|B|\leq\aleph_0$. e say that a set $B\subset A$ is $\mathrm{e}$-admissible if
$$\pi_B^{-1}(\pi_B(\overline{\mathrm{e}(\phi^{-1}(U)\cap X)}))=\overline{\mathrm{e}(\pi_B^{-1}(U)\cap X)}\hbox{~}\mbox{for all}\hbox{~}U\in\mathcal B_B,$$ where $\pi_B\colon M\to\mathbb R^B$ is the projection.
 The arguments from the proof of \cite[Proposition 3.1(ii)]{kpv1} imply that all maps $\phi_B=\pi_B|X$, where $B$ is $\mathrm{e}$-admissible,
are skeletal and form a lattice for $X$.
\end{proof}

Replacing $\kappa$-metrizable compacta in Corollary 2.5 with spaces possessing a lattice of open maps, we obtain a little bit stronder result.

\begin{pro}
Let $X$ be a space with a lattice of open maps. Then every space co-absolute to $X$ has a lattice of semi-open maps.
\end{pro}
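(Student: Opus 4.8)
The plan is to fix a common absolute $Z$ of the co-absolute spaces $X$ and $Y$, together with the perfect irreducible projections $\theta_X\colon Z\to X$ and $\theta_Y\colon Z\to Y$, and to transport the lattice $\Psi$ of open maps of $X$ to a lattice of semi-open maps of $Y$ through $Z$. The first fact I would record is that a closed irreducible map is semi-open: for nonempty open $U$ the set $\theta^{\sharp}(U)=\{x:\theta^{-1}(x)\subset U\}$ is open (because $\theta$ is closed), nonempty (because $\theta$ is irreducible) and contained in $\theta(U)$, so $\int\theta(U)\neq\varnothing$. Hence both $\theta_X$ and $\theta_Y$ are semi-open, and, since $\theta_Y$ is a surjection, a map $\psi$ with domain $Y$ is semi-open if and only if $\psi\circ\theta_Y$ is semi-open (one direction because a composition of semi-open maps is semi-open, the other because $\psi(V)=(\psi\circ\theta_Y)(\theta_Y^{-1}(V))$ for open $V\subset Y$). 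This equivalence is the device that lets me test semi-openness on $Z$. As motivation I note that Theorem 2.4 and Proposition 2.6 already give $Y$ a lattice of \emph{skeletal} maps; the content of the present statement is precisely the improvement from skeletal to semi-open, which is exactly what the openness of $\Psi$ must buy.

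Next I would set up the transport. Being a closed irreducible preimage, $\theta_X$ induces a complete Boolean isomorphism $RO(X)\cong RO(Z)$, and likewise $\theta_Y$ induces $RO(Y)\cong RO(Z)$; composing them gives a complete isomorphism $h\colon RO(X)\to RO(Y)$. For each open $\phi\in\Psi$ the regular-open algebra $\mathbb B_\phi=\{\int\cl\phi^{-1}(V):V\in RO(\phi(X))\}\subset RO(X)$ is a complete regular subalgebra of weight $\le w(\phi(X))$, and its image $h(\mathbb B_\phi)\subset RO(Y)$ is again a complete regular subalgebra of the same weight. As in the proof of Proposition 2.6, each such subalgebra of $RO(Y)$ is realized by a map $\psi_\phi\colon Y\to Y_\phi$ into a space of weight $\le w(\phi(X))$ whose regular-open algebra is exactly $h(\mathbb B_\phi)$, and any such $\psi_\phi$ is automatically skeletal. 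I would then let $\Psi'$ consist of the maps $\psi_\phi$ together with the diagonal products required by (L2).

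The verification of (L1) and (L2) for $\Psi'$ is the routine part, since both are phrased in terms of the order $\prec$, of weights, and of diagonal products, all of which correspond under $h$ to the same data for $\Psi$. Given $f\colon Y\to f(Y)$, I transport its regular-open algebra through $h^{-1}$ to a complete subalgebra of $RO(X)$ of weight $w(f(Y))$; this is the algebra of a map on $X$ (realized as in Proposition 2.6) to which (L1) applies, producing an open $\phi\in\Psi$ with $w(\phi(X))\le w(f(Y))$ whose algebra contains it. Applying $h$ shows $h(\mathbb B_\phi)$ contains the algebra of $f$, whence the corresponding $\psi_\phi\in\Psi'$ satisfies $\psi_\phi\prec f$ with $w(\psi_\phi(Y))\le w(f(Y))$. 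Condition (L2) transports verbatim, because $h$ preserves arbitrary joins and hence the formation of diagonal products.

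The hard part, and the only place where the hypothesis that $\Psi$ consists of \emph{open} rather than merely skeletal maps is used, is to show that each $\psi_\phi$ can be chosen \emph{semi-open}. This is delicate precisely because semi-openness is not preserved under co-absoluteness, so it cannot be read off the algebra $h(\mathbb B_\phi)$ alone and must be extracted from the geometry of $\theta_X$ and $\theta_Y$. Using the equivalence of the first paragraph I would reduce the claim to the semi-openness of $\psi_\phi\circ\theta_Y\colon Z\to Y_\phi$, and compare this map with $\phi\circ\theta_X\colon Z\to X_\phi$, which is genuinely semi-open as a composition of the semi-open maps $\phi$ and $\theta_X$. Both maps from $Z$ carry the same regular-open algebra — the common copy of $\mathbb B_\phi$ in $RO(Z)$ — so the task is to arrange the realization $\psi_\phi$ so that $\psi_\phi\circ\theta_Y$ inherits the semi-openness of $\phi\circ\theta_X$, and not only its algebra. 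I expect this coordination, namely choosing the cozero sets defining $\psi_\phi$ compatibly with the decomposition of $Z$ induced by $\phi\circ\theta_X$, to be the main obstacle; once it is in place the remaining steps follow the pattern of Proposition 2.6.
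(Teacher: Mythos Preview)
Your outline correctly isolates where the content of the proposition lies, but it stops precisely at that point. Everything you set up through the isomorphism $h\colon RO(X)\to RO(Y)$ is the machinery that transports \emph{skeletal} maps between co-absolute spaces, and indeed you observe yourself that semi-openness ``cannot be read off the algebra $h(\mathbb B_\phi)$ alone.'' The final paragraph then promises a ``coordination'' of the realization $\psi_\phi$ with the decomposition of $Z$ induced by $\phi\circ\theta_X$, but gives no mechanism for producing it. There is a real obstruction here: to make $\psi_\phi\circ\theta_Y$ agree with (or even factor through) $\phi\circ\theta_X$ you would need the fibers of $\theta_Y$ to be contained in those of $\phi\circ\theta_X$, and nothing in the co-absolute relationship forces this. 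In short, the regular-open-algebra transport is the wrong vehicle for semi-openness, and the proposal does not supply a replacement.

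The paper's argument avoids this difficulty by working geometrically rather than through $RO$. It follows \cite[Theorem~2.2]{kpv1}, replacing the auxiliary \cite[Proposition~2.1]{kpv1} by Proposition~2.8: one $C$-embeds into a power $\mathbb R^\Gamma$, uses Proposition~2.8 (valid because open maps are quotient) to produce a rich family $\mathcal A$ of coordinate sets $B\subset\Gamma$ with $\phi_B\in\Psi$, and then the candidate lattice on the co-absolute space consists of restrictions of the coordinate projections $\pi_B$. Semi-openness of these projections is verified directly from the openness of the $\phi_B$'s together with the irreducibility of the absolute maps, following the computation in \cite{kpv1}. The point is that the semi-open maps are built concretely as coordinate projections rather than abstractly realized from a subalgebra of $RO(Y)$, and it is this concreteness that allows the openness hypothesis on $\Psi$ to be exploited.
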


\begin{proof}
The proof follows the arguments from the proof of \cite[Theorem 2.2]{kpv1}. The only difference is that we use Proposition 2.8 below instead of
Proposition 2.1 from \cite{kpv1}.
\end{proof}

\begin{pro}
Let $X$ be $C$-embedded in $\mathbb R^\Gamma$ for some $\Gamma$ and $X$ has a lattice $\Psi$ of quotient maps. Then the family $\mathcal A=\{B\subset\Gamma: \phi_B\in\Psi\hbox{~}\}$, where $\phi_B=\pi_B|X\colon X\to\pi_B(X)$ is the restriction of the projection $\pi_B:\mathbb R^\Gamma\to\mathbb R^B$, has the following properties:
\begin{itemize}
\item[(i)] The union of any increasing subfamily of $\mathcal A$ belongs to $\mathcal A$;
\item[(ii)] Every $A\subset\Gamma$ is contained in some $B\in\mathcal A$
with $|A|=|B|$.
\end{itemize}
\end{pro}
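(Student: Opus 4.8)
The common engine for both parts is the elementary observation that, for any family $\{B_\alpha\}$ of subsets of $\Gamma$, the diagonal product of the restricted projections is equivalent to the single projection onto the union: $\phi_{\bigcup_\alpha B_\alpha}=\triangle_\alpha\phi_{B_\alpha}$ in the sense of the order $\prec$. I would establish this first. The map sending $y\in\pi_{\bigcup_\alpha B_\alpha}(X)$ to the tuple $(y|_{B_\alpha})_\alpha$ of its restrictions is continuous and witnesses $\phi_{\bigcup_\alpha B_\alpha}\prec\triangle_\alpha\phi_{B_\alpha}$; conversely, each $\gamma\in\bigcup_\alpha B_\alpha$ lies in some $B_{\alpha_0}$, so the coordinate $y_\gamma$ is recovered continuously from the $\alpha_0$-th block of the tuple, which gives $\triangle_\alpha\phi_{B_\alpha}\prec\phi_{\bigcup_\alpha B_\alpha}$. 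Hence the two maps are equal, and since $\Psi$ is closed under the equivalence $=$, one of them lies in $\Psi$ exactly when the other does.

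Granting this, part (i) is immediate. Let $\{B_\alpha\}\subset\mathcal A$ be increasing and set $B=\bigcup_\alpha B_\alpha$. For finitely many indices $\alpha_1,\dots,\alpha_k$ the set $\bigcup_i B_{\alpha_i}$ is the largest of the $B_{\alpha_i}$, so by the displayed equivalence $\triangle_i\phi_{B_{\alpha_i}}=\phi_{\bigcup_i B_{\alpha_i}}\in\Psi$. Thus the family $\{\phi_{B_\alpha}\}\subset\Psi$ satisfies the hypothesis of (L2), whence $\triangle_\alpha\phi_{B_\alpha}\in\Psi$; applying the equivalence once more yields $\phi_B\in\Psi$, that is, $B\in\mathcal A$.

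For part (ii) I would run a countable closing-off, assuming $|A|=\tau\geq\aleph_0$ (for finite $A$ the same construction only produces $|B|\le\aleph_0$). Put $A_0=A$. Given $A_n$ with $|A_n|\le\tau$, apply (L1) to $f=\phi_{A_n}$ to obtain $\psi_n\in\Psi$ with $\psi_n\prec\phi_{A_n}$ and $w(\psi_n(X))\le w(\phi_{A_n}(X))\le\tau$. Viewing $\psi_n(X)$ inside $\mathbb R^\tau$, each of its $\tau$ coordinate functions is a continuous real-valued function on $X$; since $X$ is $C$-embedded in $\mathbb R^\Gamma$, each such function extends over $\mathbb R^\Gamma$ and therefore depends, through a continuous reduced function, on only countably many coordinates. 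Collecting these coordinates gives $D_n\subset\Gamma$ with $|D_n|\le\tau$ and a continuous $\rho_n$ with $\psi_n=\rho_n\circ\phi_{D_n}$, i.e.\ $\phi_{D_n}\prec\psi_n$. Set $A_{n+1}=A_n\cup D_n$ and $B=\bigcup_n A_n$, so $A\subset B$ and $|B|=\tau$.

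It remains to identify $\phi_B$. The construction produces the interleaving $\phi_{A_{n+1}}\prec\phi_{D_n}\prec\psi_n\prec\phi_{A_n}$, so the maps $\psi_n$ and the projections $\phi_{A_n}$ are mutually cofinal under $\prec$; tracking fibres through these relations gives both $\phi_B\prec\triangle_n\psi_n$ and $\triangle_n\psi_n\prec\phi_B$, and hence $\phi_B=\triangle_n\psi_n$, the factoring maps being continuous because they are assembled coordinatewise from the continuous maps above. Finally, for $n_1<\dots<n_k$ the map $\psi_{n_k}$ is the finest, so $\triangle_i\psi_{n_i}=\psi_{n_k}\in\Psi$; thus $\{\psi_n\}$ meets the hypothesis of (L2), giving $\triangle_n\psi_n\in\Psi$ and therefore $\phi_B\in\Psi$, i.e.\ $B\in\mathcal A$. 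The step I expect to be most delicate is exactly this identification of $\phi_B$ with $\triangle_n\psi_n$: it forces the interleaving to be arranged so that the coordinate information accumulated in $B$ and the maps $\psi_n\in\Psi$ determine one another, and it is here that the $C$-embedding, via the countable support of continuous functions, is indispensable.
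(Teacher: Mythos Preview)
Your proof is correct and follows essentially the same route as the paper: the same use of (L2) for part (i) via the identity $\phi_{\bigcup_\alpha B_\alpha}=\triangle_\alpha\phi_{B_\alpha}$, and for part (ii) the same countable interleaving between projections $\phi_{A_n}$ and lattice maps $\psi_n\in\Psi$ obtained from (L1), with the $C$-embedding supplying the factorization through a set of $\le\tau$ coordinates. The paper's version compresses the final identification $\phi_B=\triangle_n\psi_n$ (and the check that $\{\psi_n\}$ satisfies the hypothesis of (L2)) into the word ``Obviously'', whereas you spell it out; in substance the arguments coincide.
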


\begin{proof}
Suppose $\{B_\alpha\}$ be an increasing family of subsets of $\Gamma$ with $B_\alpha\in\mathcal A$ for all $\alpha$, and $B=\cup B_\alpha$.
 Then for any finitely many $B_{\alpha_i}$, $i=1,..,n$, there exists $1\leq j\leq n$ such that $B_{\alpha_j}=\bigcup_{i=1}^{i=n}B_{\alpha_i}$. Hence, $\triangle_{i=1}^{i=n}\phi_{B_{\alpha_i}}=\phi_{B_{\alpha_j}}\in\Psi$.  Consequently, by (L2),
 $\phi_B=\triangle\phi_{B_\alpha}\in\Psi$ and $B\in\mathcal A$.

 Assume $C\subset\Gamma$ is an infinite set of cardinality $|C|=\tau$. We construct by
induction an increasing sequence $\{B(k)\}\subset\Gamma$ and a sequence $\{\phi_k\}\subset\Psi$ such that $B(1)=C$, $|B(k)|=\tau$,
$w(\phi_k(X))\leq\tau$ and
$\phi_{B(k+1)}\prec\phi_k\prec\phi_{B(k)}$ for all $k$. Suppose the construction is done up to level $k$ for some $k\geq 1$.  We consider each
$\phi_k(X)$ as a subspace of $\mathbb R^{\tau}$. Since $X$ is $C$-embedded
in $\mathbb R^\Gamma$, there exists a map $g_k\colon\mathbb R^\Gamma\to\mathbb R^{\tau}$ extending $\phi_k$. Then $g_k$ depends on $\tau$
many coordinates of $\mathbb R^\Gamma$, so we can find a set $B(k+1)\subset\Gamma$ of cardinality $\tau$ containing $B(k)$ such that $\pi_{B(k+1)}\prec g_k$.
Consequently, $\phi_{B(k+1)}\prec\phi_k$. Next, by condition $(\mathrm{L1})$, there exists $\phi_{k+1}\in\Psi$ with $\phi_{k+1}\prec\phi_{B(k+1)}$ and
$w(\phi_{k+1}(X))\leq\tau$. This completes the construction. Finally, let $B=\bigcup_{k=1}^{\infty}B(k)$ and
$\phi=\triangle_{k=1}^{\infty}\phi_k$. Obviously, $|B|=\tau$ and $\phi_B=\phi\in\Psi$. Hence, $C\subset B\in\mathcal A$.
\end{proof}

\section{Skeletally Dugundji spaces}

We say that a space $X$ is {\em skeletally Dugundji} \cite{kpv1} if there exists
an inverse system
$\displaystyle S=\{X_\alpha, p^{\beta}_\alpha, \alpha<\beta<\tau\}$ with surjective skeletal bonding maps, where $\tau$ is identified
with the first
ordinal $\omega(\tau)$ of cardinality $\tau$, satisfying the following conditions: (i) $X_0$ is a separable metric space
and all maps $p^{\alpha+1}_\alpha$ have metrizable kernels (i.e., there exists a separable metric space $M_\alpha$ such that
$X_{\alpha+1}$ is embedded in $X_{\alpha}\times M_\alpha$ and $p^{\alpha+1}_\alpha$ coincides with the restriction $\pi|X_{\alpha+1}$ of
the projection $\pi\colon X_{\alpha}\times M_\alpha\to X_{\alpha}$);
(ii) for any limit ordinal $\gamma<\tau$ the space $X_\gamma$ is a (dense)
subset of
$\displaystyle\lim_\leftarrow\{X_{\alpha},p^\beta_\alpha, \alpha<\beta<\gamma\}$;
(iii) $X$ is embedded in $\displaystyle\lim_\leftarrow
S$ such that $p_\alpha(X)=X_\alpha$ for each $\alpha$; (iv) for every bounded continuous
real-valued function $f$ on $\displaystyle\lim_\leftarrow S$ there exists $\alpha\in A$ such that $p_\alpha\prec f$.
It was shown in \cite{kpv1} that $X$ is skeletally Dugundji provided every $C^*$-embedding of $X$ in another space is strongly $\pi$-regular.

There exists a tight connection between openly generated compacta and their superextensions.
Ivanov \cite{i1} proved that if $X$ is openly generated compactum, then its superextension $\lambda X$ is a Dugundji space (the other implication is also true, see \cite{s81}).  
Theorem 3.1 below provides a similar connection between skeletally generated and skeletally Dugundji compacta (let us explicitly mention that for zero-dimensional compacta the equivalence $(i)\Rightarrow (iii)$ was established in \cite[Theorem 5.5.9]{hsh}).

\begin{thm}
For a compact space $X$ the following are equivalent:
\begin{itemize}
\item[(i)] $X$ is skeletally generated;
\item[(ii)] Every embedding of $\lambda X$ in another space is strongly $\pi$-regular, in particular $\lambda X$ is skeletally Dugundji;
\item[(iii)] $\lambda X$ is skeletally generated.
\end{itemize}
\end{thm}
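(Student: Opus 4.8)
The plan is to establish the cycle $(i)\Rightarrow(iii)\Rightarrow(ii)\Rightarrow(i)$, exploiting the fact that $\lambda X$ is always a compactification built from $X$ together with the superextension functor's structural properties. For the implication $(i)\Rightarrow(iii)$, I would use the known functorial behaviour of $\lambda$ on inverse systems: the superextension functor $\lambda$ commutes with (almost $\sigma$-continuous) inverse limits and, crucially, preserves skeletal bonding maps. Thus if $X$ is the almost limit of a system $S=\{X_\alpha,p^\beta_\alpha,A\}$ satisfying conditions (1)--(5), then applying $\lambda$ termwise produces a system $\lambda S=\{\lambda X_\alpha,\lambda p^\beta_\alpha,A\}$ whose bonding maps $\lambda p^\beta_\alpha$ are again skeletal, whose terms $\lambda X_\alpha$ are metrizable (since each $X_\alpha$ is separable metric), and which inherits the $\sigma$-completeness of the index set and the almost $\sigma$-continuity from $S$. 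One then checks that $\lambda X = \mathrm{a}\text{-}\varprojlim\lambda S$ and that condition (5) transfers, yielding that $\lambda X$ is skeletally generated.

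For $(iii)\Rightarrow(i)$, I would invoke Lemma 2.2. There is a canonical closed map from $\lambda X$ onto $X$ (the superextension contains the space of maximal linked systems concentrated at points as a copy of $X$, and the natural retraction or the maximal-point map is closed irreducible, or one can use that $X$ embeds in $\lambda X$ as a retract whose associated map is skeletal). More directly, since $X$ is a closed subspace and in fact there is a natural irreducible surjection relating $\lambda X$ and $X$, Lemma 2.2 lets us descend skeletal generation from $\lambda X$ to $X$. Alternatively, since $\lambda X$ is a compactification of a space mapping onto $X$ and the two are tied by closed irreducible maps through their absolutes, Theorem 2.4 applies.

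The implication $(ii)\Rightarrow(i)$ is the easiest: if every embedding of $\lambda X$ is strongly $\pi$-regular, then in particular every embedding is $\pi$-regular, so by the criterion quoted after the definition of skeletally Dugundji spaces, $\lambda X$ is skeletally Dugundji, hence (by \cite[Corollary 3.4]{kpv1}) skeletally generated, and then $(iii)\Rightarrow(i)$ finishes. The remaining content of (ii) — that $(i)$ forces \emph{strongly} $\pi$-regular embeddings of $\lambda X$ — is proved together with $(i)\Rightarrow(iii)$: the system $\lambda S$ constructed above actually has bonding maps with \emph{metrizable kernels} because $\lambda$ applied to a metrizable-kernel extension again has metrizable kernel, which upgrades skeletal generation of $\lambda X$ to skeletal Dugundji-ness, and by the characterization in \cite{kpv1} this is equivalent to every $C^*$-embedding of $\lambda X$ being strongly $\pi$-regular.

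The main obstacle I anticipate is verifying that the superextension functor genuinely preserves skeletality of the bonding maps and, more delicately, that it turns metrizable-kernel extensions into metrizable-kernel extensions so as to secure the \emph{strong} $\pi$-regularity in (ii). Skeletality is not automatically preserved by arbitrary functors, so I would need the specific geometry of $\lambda$: that $\lambda$ sends a skeletal surjection $p^\beta_\alpha\colon X_\beta\to X_\alpha$ to $\lambda p^\beta_\alpha\colon\lambda X_\beta\to\lambda X_\alpha$ with dense image of interiors, using that $\lambda$ is a normal (or at least open-map-preserving up to skeletality) functor in the sense of Shchepin. Controlling the limit step — confirming that $\lambda$ commutes with the \emph{almost} $\sigma$-continuous limits appearing in conditions (2)--(3), rather than only with genuine continuous inverse limits — is the technical heart, and is where I would spend most of the effort.

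\eop
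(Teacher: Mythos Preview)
Your argument for $(iii)\Rightarrow(i)$ does not work. There is in general \emph{no} closed irreducible surjection from $\lambda X$ onto $X$: the natural map you have in mind is the retraction, but $X$ need not be a retract of $\lambda X$ at all (for instance $\lambda S^1$ is an absolute retract, hence contractible, so $S^1$ cannot be a retract of it), and even when a retraction exists it is far from irreducible since it collapses a large space onto a small subspace. Likewise $X$ and $\lambda X$ are not co-absolute, so Theorem~2.4 does not apply either. Lemma~2.2 simply gives no handle on this implication. The paper proves $(iii)\Rightarrow(i)$ by an entirely different route: it embeds $\lambda X$ in a cube, uses Propositions~2.6 and~2.8 to produce a lattice of skeletal projections $p_A$ of $\lambda X$, invokes Shchepin's spectral theorem to pick out those indices $B$ for which $p_B=\lambda\varphi_B$ (where $\varphi_B=p_B|X$), and then shows by a direct linked-system argument that skeletality of $\lambda\varphi_B$ forces skeletality of $\varphi_B$. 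That last combinatorial step --- extracting, from $[V_1^+,\dots,V_k^+]\subset p_B(U^+)$, a pair $i,j$ with $V_i\cap V_j\subset\varphi_B(U)$ --- is the real content, and nothing in your outline substitutes for it.

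Your plan for $(i)\Rightarrow(ii)$ is also off target. You propose to show $\lambda X$ is skeletally Dugundji by pushing the almost $\sigma$-continuous system through $\lambda$ and claiming metrizable kernels; but the skeletally-Dugundji definition requires a \emph{well-ordered} system, which the directed index set $A$ is not, and you would still owe the implication ``skeletally Dugundji $\Rightarrow$ every embedding strongly $\pi$-regular'', which is the opposite direction from what the paper quotes out of \cite{kpv1}. The paper instead proves $(i)\Rightarrow(ii)$ directly: starting from a merely $\pi$-regular operator $\mathrm e$ on $\lambda X$ (available since $\lambda X$ is skeletally generated by \cite{bmk}, which is essentially your $(i)\Rightarrow(iii)$), it builds an upper semi-continuous set-valued map $r\colon\mathbb I^\tau\to\lambda X$ from $\mathrm e$ and the linked-system structure, and uses $r^\sharp$ to manufacture a \emph{strongly} $\pi$-regular operator $\mathrm e_1$. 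This explicit upgrade from $\pi$-regular to strongly $\pi$-regular via the combinatorics of the base $\{U^+\}$ is the point, and your functorial sketch does not reach it.
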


\begin{proof}
$(i)\Rightarrow (ii)$.
The superextension $\lambda X$ is the set of all maximal linked systems $\xi$ of closed subsets of $X$ (recall that $\xi$ is linked if any two elements of $\xi$ intersect). For any set $H\subset X$ let
$H^+$ be the set of all $\xi\in\lambda X$ such that $F\subset H$ for some $F\in\xi$. Then the family $\mathcal B^+$ of all sets of the form
$[U_1^+,U_2^+,..,U_k^+]=\bigcap_{i=1}^{i=k}U_i^+$, where $U_1,..,U_k$ are open in $X$, is a base for the topology of $\lambda X$.
We consider $\lambda X$ as a subset of $\mathbb I^\tau$ for some cardinal $\tau$. Since, by \cite{bmk}, $\lambda X$ is also skeletally generated,   according to Proposition 2.1, there exists a $\pi$-regular operator $\mathrm{e}\colon\Tee_{\lambda X}\to\Tee_{\mathbb I^\tau}$. We define a set-valued map
$r\colon\mathbb I^\tau\to\lambda X$ by $$r(y)=\bigcap\{(\overline{U_i})^+:y\in\mathrm{e}([U_1^+,U_2^+,..,U_k^+])\}\hbox{~}\mbox{if}\hbox{~}
y\in\bigcup\{\mathrm{e}(W):W\in\mathcal B^+\}$$ and $r(y)=\lambda X$ otherwise. This definition is correct because for every $y\in\mathbb I^\tau$ the system $\gamma_y=\{W\in\mathcal B^+:y\in\mathrm{e}(W)\}$ is linked, so $r(y)\neq\varnothing$. It is easily seen that $r$ is upper semi-continuous. Then for any open $W\subset\lambda X$ the set $$\mathrm{e}_1(W)=\bigcup\{r^\sharp([U_1^+,U_2^+,..,U_k^+]):\bigcap_{i=1}^{i=k}(\overline{U_i})^+\subset W\}$$ is open in $\mathbb I^\tau$, where $r^\sharp([U_1^+,U_2^+,..,U_k^+])=\{y:r(y)\subset [U_1^+,U_2^+,..,U_k^+]\}$. It follows directly from our definition that $\mathrm{e}_1$ satisfies conditions $(\mathrm{e}2)$ and $(\mathrm{e}3)$ from the definition of strongly $\pi$-regular operator. We are going to show that $\mathrm{e}_1(W)\cap\lambda X$ is dense in $W$ for all open $W\subset\lambda X$.

Suppose $W\subset\lambda X$ is open and $\xi\in W$. Let $\xi\in\tilde{G}\subset W$, where $\tilde{G}=[G_1^+,G_2^+,..,G_k^+]\in\mathcal B^+$.
Then for each $i\leq k$ there exists $F_i\in\xi$ and open sets $V_i$ and $U_i$ in $X$ such that $F_i\subset V_i\subset\overline{V_i}\subset U_i\subset\overline{U_i}\subset G_i$. Take $\eta\in\mathrm{e}([V_1^+,..,V_k^+])\cap\lambda X$. Then  $r(\eta)\subset\bigcap_{i=1}^{i=k}(\overline{V_i})^+\subset [U_1^+,..,U_k^+]$. Hence, $\eta\in r^\sharp([U_1^+,U_2^+,..,U_k^+])\cap\lambda X$. Since
$\bigcap_{i=1}^{i=k}(\overline{U_i})^+\subset W$, we have $\eta\in\mathrm{e}_1(W)\cap\tilde{G}$. Therefore, $\mathrm{e}_1(W)\cap\lambda X$ is dense in $W$.

$(ii)\Rightarrow (iii)$. This implication is obvious because every skeletally Dugundji space is skeletally generated \cite{kpv1}.

$(iii)\Rightarrow (i)$. Consider $\lambda X$ as a subspace of some $\mathbb I^\Gamma$. According to Proposition 2.6 and Proposition 2.8,
there exists a family $\mathcal A_c$ of countable sets $A\subset\Gamma$ such that: (i) the union of any increasing sequence from $\mathcal A_c$ belongs to $\mathcal A_c$;
(ii) any countable subset of $\Gamma$ is contained in some $A\in\mathcal A_c$; (iii) any map $p_A=\pi_A|(\lambda X)\to\pi_A(\lambda X)$, $A\in\mathcal A_c$, is skeletal, where
$\pi_A\colon\mathbb I^\Gamma\to\mathbb I^A$ denotes the projection. Let $\varphi_A$ be the restriction of $p_A$ on $X$ and $X_A=\varphi_A(X)$
for each $A\in\mathcal A_c$ (we consider $X$ as a naturally embedded subset of $\lambda X$). Then $\lambda\varphi_A$ is a map from $\lambda X$ into $\lambda X_A$.  Denote by $\mathcal B$ the family of all $B\in\mathcal A_c$ such that $p_B=\lambda\varphi_B$. Since the functor $\lambda$ is continuous, it follows from
Shchepin's spectral theorem \cite{s76}, that the union of any increasing sequence from $\mathcal B$ is again in $\mathcal B$,  and any $A\in\mathcal A_c$ is contained in some $B\in\mathcal B$. Therefore, the inverse system $\{X_B, \varphi_B^C, B, C\in\mathcal B\}$ is continuous and its limit is $X$ (here $\varphi_B^C$ is the projection from $X_C$ to $X_B$ provided $B\subset C$). So, it remains to show that each $\varphi _B$, $B\in\mathcal B$, is skeletal.

To this end, let $U\subset X$ be open and $B\in\mathcal B$. Then $U^+$ is open in $\lambda X$ and, since $p_B$ is a closed and skeletal map, the set
$p_B(U^+)$ has a non-empty interior in $\lambda X_B$. So, there are open subsets $V_i$, $i=1,..,k$, of $X_B$ with
$[V_1^+,V_2^+,..,V_k^+]\subset p_B(U^+)$. We claim that $V_i\cap V_j\subset\varphi_B(U)$ for some $i\neq j$. Indeed, otherwise for every $i\neq j$ we can choose
$y_{i,j}=y_{j,i}\in V_i\cap V_j\backslash\varphi_B(U)$. Then $\{F_1=\{y_{1,2},..,y_{1,k}\},..,F_k=\{y_{k,1},..,y_{k,k-1}\}\}$ is a linked system and generates $\eta\in\lambda X_B$ such that $\eta\in\bigcap_{i=1}^{i=k}F_i^+$. Obviously, $\eta\in\bigcap_{i=1}^{i=k}V_i^+$, which contradicts the fact that
$\bigcap_{i=1}^{i=k}V_i^+\subset p_B(U^+)$.
\end{proof}

\textbf{Acknowledgments.} The author would like to express his
gratitude to the referee of this paper whose valuable remarks and suggestions improved the paper.



\begin{thebibliography}{40}
\bibitem{bmk}
T.~Banakh, M.~Martynenko and A.~Kucharski, \textit{On functors preserving skeletal maps and skeletally generated compacta},
Topology Appl. \textbf{159} (2007), no. 10-11, 2679--2693.



\bibitem{dkz}
P.~Daniels,~K.~Kunen and H.~Zhou, \textit{On the open-open game},
Fund. Math. \textbf{145} (1994), no. 3, 205--220.

\bibitem{i1}
A.~Ivanov, \textit{Superextension of openly generated compacta},
Dokl. Akad. Nauk SSSR \textbf{259} (1981), no. 2, 275--278 (in Russian).


\bibitem{hsh} L.~Heindorf and L.~Shapiro, \textit{Nearly projective Boolean algebras},
Springer-Verlag, Berlin Heidelberg 1994.

\bibitem{kp7}
A.~Kucharski and Sz.~Plewik,  \textit{Game approach to universally
Kuratowski-Ulam spaces}, Topology Appl. \textbf{154} (2007), no. 2,
421--427.

\bibitem{kp8}
A.~Kucharski and Sz.~Plewik,  \textit{Inverse systems and
$I$-favorable spaces},  Topology Appl. \textbf{156} (2008), no. 1,
110--116.

\bibitem{kp9}
A.~Kucharski and Sz.~Plewik,  \textit{Skeletal maps and I-favorable
spaces},   Acta Univ. Carolin. Math. Phys. \textbf{51} (2010), 67--72.

\bibitem{kpv}
A.~Kucharski, Sz.~Plewik and V.~Valov,  \textit{Very $I$-favorable spaces}, Topology Appl.
\textbf{158} (2011), 1453--1459.

\bibitem{kpv1}
A.~Kucharski, Sz.~Plewik and V.~Valov,  \textit{Skeletally Dugundji spaces}, Central Eur. J. Math.
\textbf{11} (2013), no. 11, 1949--1959.

\bibitem{mr}
J.~Mioduszewski and L.~Rudolf,  \textit{$H$-closed and extremally
disconnected Hausdorff spaces},  Dissertationes Math. 66 (1969).



\bibitem{sh1} L.~Shapiro, \textit{Spaces that are co-absolute to the generalized Cantor discontinuum},
Dokl. Acad. Nauk SSSR \textbf{288} (1986), no. 6, 1322-1326 (in Russian).


\bibitem{s76}
E.~Shchepin, \textit{Topology of limit spaces with uncountable
inverse spectra}, Uspekhi Mat. Nauk \textbf{31} (1976), no. 5(191),
191--226.

\bibitem{s81}
E.~Shchepin, \textit{Functors and uncountable powers of compacta},
Uspekhi Mat. Nauk \textbf{36} (1981), no. 3(219), 3--62 (in
Russian).


\bibitem{vv}
V.~Valov, \textit{External characterization of I-favorable
spaces}, Mathematica Balkanica \textbf{25} (2011), no. 1-2, 61--78.


\bibitem{va1}
V.~M.~Valov, \textit{Some characterizations of the spaces with a
lattice of $d$-open mappings}, C. R. Acad. Bulgare Sci. \textbf{39} (1986),
no. 9, 9-12.



\end{thebibliography}
\end{document}